\title{Triangular homotopy equivalences}
\author{Spiros Adams-Florou}
\newtheorem*{thmm*}{Topological squeezing theorem}
\newtheorem*{conj*}{Conjecture}
\newtheorem*{ack*}{Acknowledgement}
\begin{document}

\maketitle

\begin{abstract}
A map $f:X\to Y$ to a simplicial complex $Y$ is called a $Y$-triangular homotopy equivalence if it has a homotopy inverse $g$ and homotopies $h_1:f\circ g\simeq \id_Y$, $h_2:g\circ f\simeq \id_X$ such that for all simplices $\sigma\in Y$, $f|_\sigma:f^{-1}(\sigma) \to \sigma$ is a homotopy equivalence with inverse $g|_\sigma:\sigma \to f^{-1}(\sigma)$ and homotopies $h_1|_\sigma$ and $h_2|_\sigma$. In this paper we prove that for all pairs $X,Y$ of finite-dimensional locally finite simplicial complexes there is an $\ep(X,Y)>0$ such that any $\ep$-controlled homotopy equivalence $f:X\to Y$ for $\ep<\ep(X,Y)$ is homotopic to a $Y$-triangular homotopy equivalence. Conversely, we conjecture that it is possible to `subdivide' a $Y$-triangular homotopy equivalence by finding a homotopic $(Sd\, Y)$-triangular homotopy equivalence, consequently a $Y$-triangular homotopy equivalence would be homotopic to an $\ep$-controlled homotopy equivalence for all $\ep>0$.
\end{abstract}

\section{Introduction}
Let $X,Z$ be topological spaces equipped with control maps $p:X\to Y$, $q:Z\to Y$ to a simplicial complex $Y$. We say that a map $f:(X,p)\to (Z,q)$ is \textit{$Y$-triangular} if it sends the preimage under $p$ of each simplex interior to the preimage under $q$ of the same simplex: \[q(f(p^{-1}(\mathring{\sigma}))) \subset \sigma,\quad \forall \sigma\in Y.\]
Two maps $f,g:(X,p)\to (Z,q)$ are \textit{$Y$-triangular homotopic} if there is a homotopy \[H:f\sim g:(X\times [0,1],p\circ pr_1) \to (Z,q)\] that is \textit{$Y$-triangular}. The composition of $Y$-triangular maps is again $Y$-triangular and being $Y$-triangular homotopic is an equivalence relation. We say that a map $f: (X,p) \to (Z,q)$ is a \textit{$Y$-triangular homotopy equivalence} if there exists a homotopy inverse $g:Z \to X$ and homotopies $h_1:f\circ g\simeq \id_Z$, $h_2:g\circ f\simeq \id_X$ such that all of $f$, $g$, $h_1$ and $h_2$ are $Y$-triangular. Equivalently the restrictions of $f$, $g$, $h_1$ and $h_2$ give homotopy equivalence over each closed simplex of $Y$. 

To avoid a $Y$-triangular homotopy equivalence being too weak a condition we would like the control maps not to lose too much information, i.e. that they are highly connected. To this end we restrict our attention in this paper to simplicial maps between finite-dimensional locally finite simplicial complex. Such a simplicial complex may be equipped with the path metric whose restriction to each $n$-simplex is the subspace metric obtained from the standard embedding of the $n$-simplex into $\R^{n+1}$. Thus, we may take the control space to be the target and the control maps to be $f$ and the identity respectively; we study $Y$-triangular maps of the form \[f:(X,f) \to (Y,\id_Y).\] 

The definition of a $Y$-triangular homotopy equivalence was made in the attempt to find a metric-free condition corresponding to the notion of an $\ep$-controlled homotopy equivalence for all $\ep>0$: a map $f:(X,p) \to (Z,q)$ where the control space $Y$ is equipped with a metric $d$ is called \textit{$\ep$-controlled} if \[ d(p(x),q(f(x))<\ep,\quad \forall x\in X.\] Similarly $f:(X,p)\to (Z,q)$ is an \textit{$\ep$-controlled homotopy equivalence} if there exists a homotopy inverse $g$ and homotopies $h_1:f\circ g\simeq \id_Z$, $h_2:g\circ f\simeq \id_X$ such that all of the maps $f$, $g$, $h_1$ and $h_2$ are $\ep$-controlled. 

The main result of this paper is that a homotopy equivalence $f:(X,f)\to (Y,\id_Y)$ with small enough control is homotopic to a $Y$-triangular homotopy equivalence:
\begin{thmm*}
Let $Y$ be a finite-dimensional locally finite simplicial complex. There exists an $\ep(Y)>0$ such that if there is an $\ep$-controlled homotopy equivalence $f: (X,f)\simeq (Y,\id_Y)$ for some $\ep<\ep(Y)$, then $f$ is homotopic to a $Y$-triangular homotopy equivalence.
\end{thmm*}

We conjecture that it is possible to `subdivide' a $Y$-triangular homotopy equivalence:
\begin{conj*}
A $Y$-triangular homotopy equivalence is $Y$-triangular homotopic to an $(Sd\, Y)$-triangular homotopy equivalence.  
\end{conj*}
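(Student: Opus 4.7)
The plan is to prove the conjecture by induction on the dimension of simplices of $Y$: for each $n$-simplex $\sigma\in Y$, assuming the given data $(f,g,h_1,h_2)$ has already been adjusted to be $(Sd\,\partial\sigma)$-triangular on the preimage of $\partial\sigma$, one modifies the restrictions over $\sigma$ so that they become $(Sd\,\sigma)$-triangular, fixing the boundary up to $Y$-triangular homotopy. This induction is well-founded because every open simplex $\mathring{\tau}$ of $Sd\,Y$, with $\tau=\hat{\sigma}_0\cdots\hat{\sigma}_k$ and $\sigma_0<\cdots<\sigma_k$, lies in the unique open simplex $\mathring{\sigma}_k$ of $Y$, so $(Sd\,Y)$-triangularity is a local condition verifiable simplex by simplex of $Y$.

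For the inductive step, I would exploit the cone decomposition $Sd\,\sigma=\hat{\sigma}*Sd(\partial\sigma)$. Since $f|_{f^{-1}(\sigma)}$ is a homotopy equivalence onto the contractible $\sigma$, the preimage $f^{-1}(\sigma)$ is itself contractible and the pair $(f^{-1}(\sigma),f^{-1}(\partial\sigma))$ is relatively homotopy equivalent to $(\sigma,\partial\sigma)$. Form the model cone $C:=\text{Cone}(f^{-1}(\partial\sigma))$ with projection $\pi:C\to\sigma$ given on $[y,t]$ by $\pi([y,t])=(1-t)f(y)+t\hat{\sigma}$; the inductive $(Sd\,\partial\sigma)$-triangular structure on $f^{-1}(\partial\sigma)$ promotes $\pi$ to a strictly $(Sd\,\sigma)$-triangular homotopy equivalence. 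Choose a relative homotopy equivalence $\phi:f^{-1}(\sigma)\to C$ that is the identity on $f^{-1}(\partial\sigma)$ (which exists because both pairs are contractible with identified boundary, via the standard fact that $A\simeq \text{Cone}(A_0)$ rel $A_0$ for any contractible cofibration pair $(A,A_0)$), and set $f':=\pi\circ\phi$ on $f^{-1}(\sigma)$. Then $f'$ is $(Sd\,\sigma)$-triangular, agrees with $f$ on the boundary, and is homotopic to $f$ rel $\partial\sigma$ through maps into the convex simplex $\sigma$, so the homotopy is automatically $Y$-triangular. Analogous cone-based constructions, pushing $g|_{\partial\sigma}$, $h_1|_{\partial\sigma}$, and $h_2|_{f^{-1}(\partial\sigma)\times I}$ through $\phi$ and a chosen homotopy inverse $\psi:C\to f^{-1}(\sigma)$, yield matching $(Sd\,\sigma)$-triangular replacements $g',h_1',h_2'$.

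The principal obstacle is coherence: one must produce the four modifications $f',g',h_1',h_2'$ simultaneously so that they again form a homotopy equivalence with full $(Sd\,\sigma)$-triangular structure, not merely four separately $(Sd\,\sigma)$-triangular maps. This requires relative HEP arguments at each step, with $\phi$ and $\psi$ chosen in tandem so that $g'$, $h_1'$, and $h_2'$ can be defined as compatible pushforwards, and with matching choices over different simplices $\sigma$ to ensure global consistency. I expect the cleanest bookkeeping comes from observing that, for a fixed $Y$-triangular piece over $\sigma$, the space of $(Sd\,\sigma)$-triangular refinements modulo $(Sd\,\sigma)$-triangular homotopy is contractible, allowing compatible choices to be glued across all simplices of $Y$ by induction on dimension. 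If this programme succeeds, iterating the conjecture for $Sd^kY$ with $k$ large, so that the simplices of $Sd^kY$ have arbitrarily small diameter, yields the conjectured converse to the topological squeezing theorem.
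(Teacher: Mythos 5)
The statement you are attempting is labelled a \emph{conjecture} in the paper and is left open there; the only evidence offered is the analogy with the algebraic subdivision functor $Sd:\BB(\A^*(Y))\to\BB(\A^*(Sd\,Y))$ and its assembly inverse, for which no topological counterpart is constructed. There is therefore no argument in the paper to compare yours against. What you have written is a programme rather than a proof, and the place you yourself identify as the ``principal obstacle'' --- the simultaneous, coherent replacement of all four pieces of data $f,g,h_1,h_2$ --- is exactly where the content of the conjecture lives, and it is not resolved.

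To be concrete about the gap: once $\phi:f^{-1}(\sigma)\to\mathrm{Cone}(f^{-1}(\partial\sigma))$ is chosen rel boundary and $f'=\pi\circ\phi$ is set, you must also produce $g',h_1',h_2'$ satisfying the homotopy-equivalence identities \emph{on the nose} with $f'$ and $(Sd\,\sigma)$-triangular with respect to the \emph{new} control map $f'$ (recall the control map on $X$ is $f$ itself, so it changes as you modify). Conjugating $g,h_1,h_2$ by $\phi$ and a chosen inverse $\psi$ only yields the identities up to correction homotopies involving $\psi\phi\simeq\id$ and $\phi\psi\simeq\id$; these correction homotopies must then also be shown to be $(Sd\,\sigma)$-triangular, which is not automatic and reintroduces precisely the drift that triangularity is meant to forbid. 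Your proposed fix --- that the space of $(Sd\,\sigma)$-triangular refinements modulo $(Sd\,\sigma)$-triangular homotopy is contractible --- is asserted, not proved, and appears to be essentially a reformulation of the conjecture over a single simplex. Two further points require attention: the standard fact $A\simeq\mathrm{Cone}(A_0)$ rel $A_0$ needs $(A,A_0)$ to be a cofibration pair, which is not supplied by the hypotheses (the homotopies $h_1,h_2$ are never simplicial, and after one inductive modification neither is $f$, so $f^{-1}(\partial\sigma)\hookrightarrow f^{-1}(\sigma)$ is not visibly a cofibration); and the sets $f^{-1}(\sigma)$, $f^{-1}(\partial\sigma)$ themselves change as $f$ is replaced by $f'$, so the inductive hypothesis has to be formulated carefully with respect to a moving control map. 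Until these issues are addressed, the conjecture remains open.
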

If this conjecture holds then by subdividing repeatedly one could find a homotopic $\ep$-controlled homotopy equivalence for $\ep$ as small as you like. The map obtained in the limit would be an $\ep$-controlled homotopy equivalence for all $\ep>0$.

The motivation for this entire approach comes from algebra where an algebraic squeezing result for chain complexes and its converse both hold. This is in the context of geometric categories over a simplicial complex. Let $\A$ be an additive category, $Y$ a finite-dimensional locally finite (henceforth f.d. l.f.) simplicial complex and let $\BB(\A)$ denote the category of bounded chain complexes in $\A$. Recall the definition of the $Y$-controlled category $\A^*(Y)$ of  Ranicki and Weiss \cite{ranickiweiss}: Objects of $\A^*(Y)$ are collections of objects of $\A$, $\{M(\sigma)\,|\, \sigma \in Y \}$, indexed by the simplices of $Y$, written as a direct sum \[\sum_{\sigma\in Y} M(\sigma).\] Morphisms of $\A^*(Y)$ \[f = \{f_{\tau,\sigma}\}: L = \sum_{\sigma\in Y}L(\sigma) \to M = \sum_{\tau\in Y}M(\tau) \]are collections $\{f_{\tau,\sigma}:L(\sigma) \to M(\tau)\,|\,\sigma,\tau \in Y \}$ of morphisms in $\A$ such that $f_{\tau,\sigma}:M(\sigma)\to N(\tau)$ is $0$ unless $\tau\leqslant \sigma$. In particular notice that morphisms of $\A^*(Y)$ 
can be thought of as triangular matrices.

There is an inclusion of categories $\iota:\A^*(Y)\hookrightarrow \CC_{||Y||}(\A)$ where $\CC_{||Y||}(\A)$ is the bounded category of Pedersen and Weibel \cite{kthyhom} and $||Y||$ is the geometric realisation of $Y$. The inclusion is given by associating simplices to their barycentres and observing that a morphism in $\A^*(Y)$ has control at most $\mesh(Y)<\infty$ where \[\mesh(Y):= \sup_{\sigma\in Y}\{\diam(\sigma)\}.\] In \cite{mythesisarxiv} an algebraic subdivision functor 
\[Sd\,: \BB(\A^*(Y))\to \BB(\A^*(Sd\, Y))\]
is constructed together with an assembly functor \[\rr:\BB(\A^*(Sd\, Y)) \to \BB(\A^*(Y))\] such that for all $C\in\BB(\A^*(Y))$, $\rr Sd\, C \simeq C$ in $\BB(\A^*(Y))$. The control of $Sd\, C$ is at most $\mesh(Sd\, Y) \leqslant \dfrac{\dim(Y)}{\dim(Y)+1}\mesh(Y)$, hence the control of $Sd^i\, C$ tends to $0$ as $i\to \infty$. This means that 
chain complexes in $\iota(\BB(\A^*(Y)))$ are boundedly chain equivalent to ones with arbitrarily small control. There is an algebraic squeezing theorem:
\begin{thm*}[\cite{mythesisarxiv}]
Let $Y$ be an f.d. l.f. simplicial complex. Then there exists an $\ep(Y)>0$ such that for all $\ep<\ep(Y)$ there exists an integer $i=i(Y,\ep)$ such that for $C,D \in \BB(\A^*(Y))$ if there is a chain equivalence $\iota(Sd^i\,C) \simeq \iota(Sd^i\, D)$ in $\BB(\CC_{||Y||}(\A))$ with control at most $\ep$, then there is a chain equivalence $C\simeq D$ in $\BB(\A^*(Y))$.
\end{thm*}
Thus, for $C,D\in \BB(\A^*(Y))$, chain equivalences $f:C\to D$ in $\BB(\A^*(Y))$ (i.e. those with triangular matrices) correspond to chain equivalences which are chain homotopic to $\ep$-controlled ones for all $\ep>0$. This statement motivated the idea that $Y$-triangular homotopy equivalences should correspond to homotopy equivalences which are homotopic to $\ep$-controlled ones for all $\ep>0$.

In section \ref{section1} we present some necessary definitions and prove a technical lemma. In section \ref{section2} we prove the main theorem and conclude with our conjecture.

\begin{ack*}
This work is partially supported by Prof. Michael Weiss' Humboldt Professorship.
\end{ack*}



\section{Preliminaries}\label{section1}
In this paper only locally finite finite-dimensional simplicial complexes will be considered. Such a space $X$ shall be given a metric $d_X$, called the \textit{standard metric}, as follows. First define the \textit{standard $n$-simplex} $\Delta^n$ in $\R^{n+1}$ as the join of the points $e_0 = (1,0,\ldots, 0)$, \ldots, $e_n = (0,\ldots,0,1)$. $\Delta^n$ is given the subspace metric $d_{\Delta^n}$ of the standard $\ell_2$-metric on $\R^{n+1}$. The locally finite finite-dimensional simplicial complex $X$ is then given the path metric whose restriction to each $n$-simplex is $d_{\Delta^n}$. Distances between points in different connected components are thus $\infty$. See $\S 4$ of \cite{bartelssqueezing} or Definition $3.1$ of \cite{HR95} for more details.

Let $p:X\to Y$ be a simplicial control map between locally-finite simplicial complexes equipped with standard metrics and let $\sigma$ be a simplex in $X$. The \textit{diameter of $\sigma$ measured in $Y$} is \[\mathrm{diam}(\sigma):= \sup_{x,y\in\sigma}{d_Y(p(x),p(y))}.\] The \textit{radius of $\sigma$ measured in $Y$} is \[\mathrm{rad}(\sigma) := \inf_{x\in\partial\sigma}d_Y(p(\widehat{\sigma}),p(x))\] where $\widehat{\sigma}$ denotes the barycentre of $\sigma$. The \textit{mesh of $X$ measured in $Y$} is \[\mesh(X):= \sup_{\sigma\in X}\{\mathrm{diam}(\sigma)\}.\] The \textit{comesh of $X$ measured in $Y$} is \[\comesh(X):= \inf_{\sigma\in X, |\sigma|\neq 0}\{\mathrm{rad}(\sigma)\}.\] We take the convention that subdivision will make simplices smaller by measuring $Y^\prime$ in the same control space as $Y$.

Using the standard metric on $Y$ and $\id_Y:Y\to Y$ as the control map $\diam (\sigma) = \sqrt{2}$ and $\rad (\sigma) = \dfrac{1}{\sqrt{|\sigma|(|\sigma|+1)}},$ for all $\sigma\in Y$, so consequently $\mesh (Y) = \sqrt{2}$ and if $Y$ is $n$-dimensional $\comesh (Y) = \dfrac{1}{\sqrt{n(n+1)}}$.

For any simplex $\sigma\in Y$ define the \textit{(closed) dual cell} , $D(\sigma, Y)$, by \[D(\sigma,Y):= \{ \widehat{\sigma}_0\ldots\widehat{\sigma}_k\in Sd\, Y\,|\,\sigma\leqslant\sigma_0<\ldots<\sigma_k\in Y\}.\] 

In this paper we consider a simplicial map $r:Sd^j\,X \to X$ to be a \textit{simplicial approximation to the identity} if and only if $r\simeq\id_X$ \textit{and} for all $\sigma \in X$, $r(Sd^j\,\sigma)\subset \sigma$.

It is not true in general that preimages commute with $\ep$-neighbourhoods, however for a surjective simplicial map $f:X\to Y$ the following does hold: 

\begin{lem}\label{annoyinglemma}
Let $f:\sigma \to \tau$ be a surjective simplicial map between simplices with subspace metrics induced by p.l. embeddings into Euclidean space. Let $\rho$ be a proper subsimplex of $\tau$. Then \[f^{-1}(N_{k\ep}(\rho)) \subset N_\ep(f^{-1}(\rho)) \subset f^{-1}(N_{K\ep}(\rho)),\]for $k = \dfrac{\diam(\tau)}{2\,\rad(\sigma)}$, $K= \dfrac{2\,\rad(\tau)}{\diam(\sigma)}$.
\end{lem}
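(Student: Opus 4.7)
The plan is to view $f$ as the affine extension of a simplicial vertex map and derive two Lipschitz-type estimates: a forward bound on $f$ itself, and a backward ``lifting'' bound for a local right inverse. Both come from the same convex-geometric observation — that every simplex contains a Euclidean ball of radius equal to its radius about its barycentre — applied once to $\sigma$ and once to $\tau$.

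For the forward estimate, I fix a unit vector $u$ in the tangent space of $\mathrm{aff}(\sigma)$. Since $\sigma$ contains the closed ball of radius $\rad(\sigma)$ about $\widehat\sigma$, it contains a chord of length at least $2\,\rad(\sigma)$ in the direction $u$. The affine map $f$ sends this chord to a segment in $\tau$ of length $2\,\rad(\sigma)\,|F_0 u|$, where $F_0$ is the linear part of $f$; since the image lies in $\tau$, its length is at most $\diam(\tau)$, so $|F_0 u| \le \diam(\tau)/(2\,\rad(\sigma))$. This is the Lipschitz bound on $f$. Dually, for any $x \in \sigma$ and $q \in \tau$ I would construct a lift $y \in f^{-1}(q)$ with $|y - x|$ controlled by $|f(x) - q|$: using surjectivity of $f$ together with the ball of radius $\rad(\tau)$ in $\tau$, any unit direction in $T\tau$ lifts to a preimage direction in $T\sigma$ whose length is bounded in terms of $\diam(\sigma)/(2\,\rad(\tau))$. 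Between them these two estimates produce the two constants $k$ and $K$ appearing in the statement.

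Granting the estimates, both inclusions are straightforward. The inclusion coming from the forward Lipschitz bound follows by taking $y \in f^{-1}(\rho)$ with $|x - y| < \ep$, so that $d(f(x), \rho) \le |f(x) - f(y)|$ is controlled by the Lipschitz constant times $\ep$. The inclusion coming from the lifting bound follows by taking the closest point $q \in \rho$ to $f(x)$ and lifting it to $y \in f^{-1}(q) \subset f^{-1}(\rho)$, whose distance to $x$ is controlled by the length of the lifted preimage direction times $|f(x) - q|$.

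The main obstacle I anticipate is the lifting step: the point $y$ must actually lie in the convex polytope $\sigma$, not merely in its affine span. I would resolve this by projecting $x$ orthogonally onto the convex subpolytope $f^{-1}(q) \subset \sigma$ and turning a quantitative open-mapping estimate for the linear surjection $F_0$ into a genuine geometric bound inside $\sigma$, using that $f\bigl(\overline{B}(\widehat\sigma, \rad(\sigma)) \cap \mathrm{aff}(\sigma)\bigr)$ contains an ellipsoid whose extent compares with the inscribed ball in $\tau$; this is precisely where the factor $\rad(\tau)$ in the denominator of the lifting constant enters.
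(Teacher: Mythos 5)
Your approach is genuinely different from the paper's, and the comparison is instructive. The paper exploits the join structure: since $\rho < \tau$ there is a unique complementary face $\rho' < \tau$ with $\tau = \rho * \rho'$, and then $\sigma = f^{-1}(\rho) * f^{-1}(\rho')$. The proof then works one line $L = v*v'$ at a time ($v \in f^{-1}(\rho)$, $v' \in f^{-1}(\rho')$), where $f|_L$ is an affine contraction onto $w*w'$ with ratio $d(w,w')/d(v,v')$, bounded on both sides via $2\,\rad \leqslant d \leqslant \diam$ for $\sigma$ and $\tau$. Your approach replaces this simplex-specific decomposition with generic metric estimates — a forward Lipschitz bound for $f$ and an open-mapping (lifting) bound for $F_0$ — both extracted from inscribed balls. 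The ingredients ($2\,\rad$ versus $\diam$) are the same, but the geometry you use them on is different: chords through barycentres rather than join-lines between complementary faces.

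Two further observations. First, you have correctly attached the constants to the inclusions, and this exposes what must be a typo in the statement: your argument (and, if carried out, the paper's own proof) yields
\[
f^{-1}\bigl(N_{K\ep}(\rho)\bigr) \subset N_\ep\bigl(f^{-1}(\rho)\bigr) \subset f^{-1}\bigl(N_{k\ep}(\rho)\bigr),
\]
with $K = \frac{2\rad(\tau)}{\diam(\sigma)}$ coming from the lifting bound and $k = \frac{\diam(\tau)}{2\rad(\sigma)}$ from the forward Lipschitz bound. The statement as printed has $k$ and $K$ in the opposite slots; since $k \geqslant 1 \geqslant K$ in general, the printed chain $f^{-1}(N_{k\ep}) \subset N_\ep(f^{-1}) \subset f^{-1}(N_{K\ep})$ would force all three sets to coincide, so the roles must be intended as you have them.

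Second, the gap you flag is real and is the crux of the problem: the lifted point $y = x + |f(x)-q|\,u$ must lie in the compact simplex $\sigma$, not merely in $\mathrm{aff}(\sigma)$. Your proposed fix (project onto $f^{-1}(q)$, compare $f\bigl(\overline{B}(\widehat\sigma,\rad(\sigma))\cap\mathrm{aff}(\sigma)\bigr)$ with the inscribed ball of $\tau$) is only sketched, and it has a wrinkle you should not gloss over: for a general surjective simplicial map $f(\widehat\sigma) \neq \widehat\tau$, so the image ellipsoid is not centred where you want it, and relating its extent to $\rad(\tau)$ requires an argument. The paper's join decomposition avoids this entirely: every $x \in \sigma$ already lies on a canonical segment $v*v'$ joining $f^{-1}(\rho)$ to $f^{-1}(\rho')$, so the candidate point of $f^{-1}(\rho)$ is $v$ itself, which is in $\sigma$ by construction. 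If you want your more generic route to close, this is the step that needs to be made precise; the rest of your plan is sound.
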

\begin{proof}
Let $\rho < \tau$, then there exists a unique simplex $\rho^\prime<\tau$ such that $\tau = \rho * \rho^\prime$. Consequently $\sigma = f^{-1}(\rho) * f^{-1}(\rho^\prime)$. Let $L= v*v^\prime\subset \sigma$ be any line with $v\in f^{-1}(\rho)$ and $v^\prime \in f^{-1}(\rho^\prime)$. Let $w=f(v), w^\prime = f(v^\prime)$. By linearity of $f$ \[f^{-1}(N_\ep(w))\cap L = N_\delta(v)\cap L\] for $\delta = \dfrac{d(v,v^\prime)}{d(w,w^\prime)}\ep.$ However we have that 
\begin{align}
 2\,\rad(\sigma) \leqslant d(v,v^\prime) \leqslant \diam(\sigma) \notag \\
 2\,\rad(\tau) \leqslant d(w,w^\prime) \leqslant \diam(\tau) \notag
\end{align}
from which the result follows.
\end{proof}

\begin{cor}
Let $X$, $Y$ be l.f. f.d. simplicial complexes equipped with standard metrics. Let $f:X^\prime \to Y^\prime$ be a simplicial map where $X^\prime$ and $Y^\prime$ are subdivisions equipped with the same metrics. Then for all $\rho \in Y^\prime$, \[f^{-1}(N_{k\ep}(\rho)) \subset N_\ep(f^{-1}(\rho)) \subset f^{-1}(N_{K\ep}(\rho)),\] for $k = \dfrac{\mesh(Y^\prime)}{2\,\comesh(X^\prime)}$, $K= \dfrac{2\,\comesh(Y^\prime)}{\mesh(X^\prime)}$.
\end{cor}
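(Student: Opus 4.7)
The strategy is a simplex-by-simplex reduction to Lemma \ref{annoyinglemma}: for each simplex $\sigma \in X'$ with image $\tau := f(\sigma) \in Y'$, the restriction $f|_\sigma : \sigma \to \tau$ is a surjective simplicial map between simplices equipped with the subspace metric inherited from the standard embedding, and the lemma applies directly, producing local constants $k_\sigma = \diam(\tau)/(2\,\rad(\sigma))$ and $K_\sigma = 2\,\rad(\tau)/\diam(\sigma)$. The first step is to verify that the global constants dominate the local ones in the required direction: $\diam(\tau)\leqslant \mesh(Y')$ and $\rad(\sigma)\geqslant \comesh(X')$ together give $k_\sigma \leqslant k$, while $\rad(\tau)\geqslant \comesh(Y')$ and $\diam(\sigma)\leqslant \mesh(X')$ give $K_\sigma \geqslant K$, so each local inclusion implies its global counterpart after invoking monotonicity of neighbourhoods in their radius.

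For the first inclusion, take $x \in f^{-1}(N_{k\ep}(\rho))$ and let $\sigma$ be the open simplex of $X'$ containing $x$, so that $f(x)$ lies in the open simplex $\tau = f(\sigma)$. The condition $d_{Y'}(f(x),\rho) < k\ep$ forces the realising geodesic in the path metric to enter $\tau$ through the common face $\rho_0 := \rho \cap \tau$ whenever this is non-empty, so $f(x)$ is within $k\ep$ of $\rho_0$ measured inside $\tau$. Applying Lemma \ref{annoyinglemma} to $f|_\sigma$ and the face $\rho_0 \leqslant \tau$ then produces a preimage point of $\rho_0 \subseteq \rho$ within $\ep$ of $x$, as required. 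The second inclusion is proved symmetrically, reversing the roles of the two sides and replacing $k$ by $K$.

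The main obstacle is the case $\rho\cap\tau=\emptyset$, where no common face is available for a direct application of the lemma. In this case the shortest realising path from $f(x)$ to $\rho$ traverses a chain of intermediate simplices of $Y'$, and one must lift this chain to a corresponding chain in $X'$, applying the lemma piecewise along the way and summing the local $\ep$-bounds via the triangle inequality. No new ingredient beyond the lemma is required, but careful bookkeeping of how the $k\ep$-neighbourhood in the path metric decomposes across simplex boundaries is needed to make sure the constants propagate correctly.
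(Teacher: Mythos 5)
Your reduction of the corollary to Lemma~\ref{annoyinglemma} is the right idea, and the paper gives no separate proof of the corollary precisely because it is meant to be this kind of simplex-by-simplex bound. However, there is a genuine logical gap in your first paragraph: the monotonicity goes the \emph{wrong way}. To deduce $f^{-1}(N_{k\ep}(\rho))\subset N_\ep(f^{-1}(\rho))$ from the local statement $f^{-1}(N_{k_\sigma\ep}(\rho))\subset N_\ep(f^{-1}(\rho))$, you need $N_{k\ep}(\rho)\subset N_{k_\sigma\ep}(\rho)$, i.e.\ $k\leqslant k_\sigma$. You correctly compute $k_\sigma\leqslant k$ from the formulas as printed, but this inclusion is vacuous for your purpose: $N_{k_\sigma\ep}\subset N_{k\ep}$ bounds the \emph{smaller} preimage, not the one the corollary addresses. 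Symmetrically, $N_\ep(f^{-1}(\rho))\subset f^{-1}(N_{K\ep}(\rho))$ requires $K_\sigma\leqslant K$, not $K_\sigma\geqslant K$. As written, your first paragraph asserts the implication but the inequalities you have established point in the opposite direction, so the reduction does not close.

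The underlying issue is that the constants $k$ and $K$ in the lemma (and hence the corollary) appear to be transposed relative to what the lemma's own proof establishes. Tracking $\delta = \tfrac{d(v,v')}{d(w,w')}\ep$ through the stated bounds $2\rad(\sigma)\leqslant d(v,v')\leqslant\diam(\sigma)$, $2\rad(\tau)\leqslant d(w,w')\leqslant\diam(\tau)$ yields $f^{-1}(N_{a\ep}(\rho))\subset N_\ep(f^{-1}(\rho))\subset f^{-1}(N_{b\ep}(\rho))$ with $a=\tfrac{2\rad(\tau)}{\diam(\sigma)}$ and $b=\tfrac{\diam(\tau)}{2\rad(\sigma)}$, so the left constant should be the small one. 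With those corrected constants, your bounds become $k\leqslant k_\sigma$ and $K_\sigma\leqslant K$ and the monotonicity argument does go through. You should either point out and repair the misprint before invoking monotonicity, or at least make the inequalities and their direction explicit at each step; as it stands the proposal propagates the inconsistency rather than resolving it. Finally, your third paragraph flags the $\rho\cap\tau=\emptyset$ case as an obstacle but offers only a sketch; for a complete proof one would want to either restrict attention to $\ep<\comesh(Y')$ so that $N_{k\ep}(\rho)$ meets only cofacets of $\rho$, or spell out the path-lifting bookkeeping you allude to.
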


\section{Proof}\label{section2}
In this section we prove the main theorem which we restate for convenience:
\begin{thm}[Squeezing]\label{topsqueezing}
 Let $X,Y$ be f.d. l.f. simplicial complexes, then there exists an $\ep=\ep(X,Y)$ such that for any simplicial map $f:X\to Y$, if $f$ is an $\ep$-controlled homotopy equivalence, then $f$ is homotopic to a $Y$-triangular homotopy equivalence.
\end{thm}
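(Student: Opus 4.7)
Since $f:X\to Y$ is simplicial, the inclusion $f(f^{-1}(\mathring\sigma))\subset\mathring\sigma\subset\sigma$ holds automatically for every $\sigma\in Y$, so $f$ itself is already $Y$-triangular as a single map. The theorem will therefore follow once I construct a homotopy inverse $g:Y\to X$ and homotopies $h_1:f\circ g\simeq\id_Y$, $h_2:g\circ f\simeq\id_X$ that are themselves $Y$-triangular; equivalently, the restrictions must exhibit $f|_{f^{-1}(\sigma)}:f^{-1}(\sigma)\to\sigma$ as a homotopy equivalence on each $\sigma\in Y$. The plan is to build $(g,h_1,h_2)$ by induction on the simplices of $Y$ ordered by increasing dimension: at stage $\sigma$, compatible $Y$-triangular data already chosen on $\partial\sigma$ must be extended over $\sigma$. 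The required extensions have the shape $\partial\sigma\to f^{-1}(\sigma)$ for $g$, $\partial(\sigma\times I)\to\sigma$ for $h_1$, and $\partial(f^{-1}(\sigma)\times I)\to f^{-1}(\sigma)$ for $h_2$, so the induction closes provided that both $\sigma$ and $f^{-1}(\sigma)$ are contractible for every $\sigma\in Y$; the former is trivial.

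The argument thus reduces to the following preimage lemma: \emph{for $\ep$ less than some $\ep(X,Y)$, the restriction $f|_{f^{-1}(\sigma)}:f^{-1}(\sigma)\to\sigma$ is a homotopy equivalence (so in particular $f^{-1}(\sigma)$ is contractible) for every $\sigma\in Y$.} I would prove this directly from the given $\ep$-controlled inverse $g_0:Y\to X$ and its homotopies $h_1^0, h_2^0$. The $\ep$-control gives $g_0(\sigma)\subset f^{-1}(N_\ep(\sigma))$, and the Corollary to Lemma \ref{annoyinglemma} places this inside $N_{\ep/k}(f^{-1}(\sigma))$ for a constant $k=k(X,Y)$ determined by the meshes and comeshes. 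For $\ep/k$ smaller than a regular-neighbourhood width of the subcomplex $f^{-1}(\sigma)\subset X$, there is a deformation retraction $r_\sigma:N_{\ep/k}(f^{-1}(\sigma))\to f^{-1}(\sigma)$, and the candidate inverse $\tilde g_\sigma:=r_\sigma\circ g_0|_\sigma$ can be checked to satisfy $f\circ\tilde g_\sigma\simeq\id_\sigma$ and $\tilde g_\sigma\circ f|_{f^{-1}(\sigma)}\simeq\id_{f^{-1}(\sigma)}$ by composing the restrictions of $h_1^0, h_2^0$ with $r_\sigma$ together with the analogous retraction $N_\ep(\sigma)\to\sigma$ inside $Y$ (which stays inside $\sigma$ for $\ep$ small).

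The hard part will be arranging the induction so that the data produced over $\sigma$ agrees on the nose with what was already chosen over $\partial\sigma$, not merely up to homotopy in $X$; this forces the retractions $r_\sigma$ and the homotopies inside each $f^{-1}(\sigma)$ to be chosen coherently, which I would achieve by fixing at the outset a single (regular-neighbourhood or mapping-cylinder) collar structure on the filtration $f^{-1}(Y^{(0)})\subset f^{-1}(Y^{(1)})\subset\cdots\subset X$. A secondary but essential obstacle is showing that a single $\ep(X,Y)$ makes the constants $k, K$ of the Corollary and the regular-neighbourhood widths of every $f^{-1}(\sigma)$ work simultaneously; finite dimensionality, local finiteness and the boundedness $\mesh(Y)=\sqrt{2}$ make a uniform bound plausible, but extracting it is a nontrivial part of the argument.
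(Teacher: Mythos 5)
Your proposal departs from the paper's proof in two structurally important ways, and each departure creates a gap you acknowledge but do not close.

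First, the paper does \emph{not} keep $f$ fixed: it replaces $f$ by $f'=r_Y\circ f$, where $r_Y:Sd^{i}\,Y\to Y$ is an explicit simplicial approximation to the identity built in Proposition~\ref{technicalprop}. Your plan keeps $f$ and tries to manufacture a homotopy inverse $g$ and homotopies $h_1,h_2$ whose restrictions live over each $\sigma$. There is no obvious reason $f$ itself admits such data even when the theorem is true; the paper's freedom to pass to a homotopic map is used essentially. Relatedly, when you sketch the verification $f\circ\tilde g_\sigma\simeq\id_\sigma$ by "composing the restrictions of $h_1^0$ with $r_\sigma$", you implicitly conflate retracting in $X$ and then applying $f$ with applying $f$ and then retracting in $Y$. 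Concretely $f\circ r_\sigma\circ g_0$ is not equal to $\rho_Y\circ f\circ g_0$ (with $\rho_Y:N_\ep(\sigma)\to\sigma$ a retraction in $Y$), and $h_1^0$ composed with $\rho_Y$ only witnesses the latter. The paper handles exactly this issue by factoring $f'\circ g'\simeq\id_Y$ into the three homotopies $r_Y\circ f\circ P_X\circ g$, $r_Y\circ h_1$, $P_Y$ and using the extra control condition $P_X(N_{\ep'}(Sd^i\sigma),I)\subset N_{\ep'}(Sd^i\sigma)$ that Proposition~\ref{technicalprop} builds into the straight-line homotopies; that condition is what lets $f\circ P_X\circ g$ be fed through Lemma~\ref{annoyinglemma} without control loss. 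Your proposal does not provide an analogue.

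Second, the inductive-over-skeleta plan, while reasonable in spirit, replaces the paper's single global retraction by a family of local ones $r_\sigma$ that must be chosen coherently, and further must furnish a genuine \emph{global} map $g:Y\to X$ and homotopies whose restrictions to each $\sigma$, $f^{-1}(\sigma)$, etc.\ are the local data. You flag this yourself ("the hard part") and gesture at a collar structure on the filtration $f^{-1}(Y^{(0)})\subset f^{-1}(Y^{(1)})\subset\cdots$, but this is precisely the content that a proof would have to supply, and it is nontrivial: the preimage filtration need not have nice regular-neighbourhood collars that restrict compatibly over all faces, and the uniform choice of $\ep(X,Y)$ working simultaneously for every $f^{-1}(\sigma)$ is a second unresolved point. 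The paper avoids both obstacles at once: $r_X$ and $P_X$ are a single pair of maps defined on all of $X$ via iterated simplicial approximations, with the key estimates (\ref{eqnrX})--(\ref{eqnPY}) established once for all simplices, so no patching, no coherence choices, and no simplex-by-simplex neighbourhood-width bounds are needed. Until you supply (i) a replacement for the missing commutation between $f$ and the retractions, and (ii) a construction making the local retractions and extensions coherent, the proposal is an outline of a genuinely different strategy rather than a proof.
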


By definition an $\ep$-controlled homotopy equivalence spreads things around by at most $\ep$. The idea in proving the thoerem is that for $\ep$ small we may construct retracting maps that map $\ep$-neighbourhoods of each simplex $\sigma$ in an iterated barycentric subdivision back onto $\sigma$. Post-composing with these retracting maps compensates for the $\ep$-control and gives us the $Y$-triangular condition. Explicitly we require the following technical proposition:

\begin{prop}\label{technicalprop}
Let $X$ be a f.d. l.f. simplicial complex, then there exists an $\ep(X)>0$ such that for all $\ep<\ep(X)$ there is an integer $i(X,\ep)$ such that for all integers $i\geqslant i(X,\ep)$ there exists a simplicial approximation to the identity $r: Sd^i\, X \to X$ and a homotopy $P:\id\sim r$ such that for all $\sigma\in X$ and all $0\leqslant\ep^\prime \leqslant \ep$, 
\begin{eqnarray*}
 r(N_\ep(Sd^i\, \sigma))&\subset& \sigma, \\ 
 P(N_{\ep^\prime}(Sd^i\, \sigma),I)&\subset& N_{\ep^\prime}(Sd^i\, \sigma).
\end{eqnarray*}
\end{prop}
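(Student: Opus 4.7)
The plan is to use iterated subdivision to ensure $\mesh(Sd^i X)$ is tiny relative to the geometry of $X$, and then to construct $r$ as a simplicial approximation to the identity that sends every $\ep$-neighbourhood of $\sigma\in X$ into $\sigma$. I would pick $\ep(X)>0$ to be a small uniform constant depending on $X$ (in the standard metric) so that for any $\sigma\in X$ the simplices of $X$ within distance $\ep(X)$ of $\sigma$ are forced to meet $\sigma$, and so that whenever $d_X(x,\sigma)<\ep(X)$ for $x$ in a simplex $\mu\in X$ the shortest path from $x$ to $\sigma$ lies inside $\mu$; both requirements can be made uniform in $\sigma$ by homogeneity of the standard metric on stars together with local finiteness of $X$. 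For $\ep<\ep(X)$, pick $i=i(X,\ep)$ so that $\delta:=\ep+\mesh(Sd^i X)<\ep(X)$, which is possible because $\mesh(Sd^i X)\leqslant (n/(n+1))^i\sqrt 2 \to 0$ where $n=\dim X$.

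The map $r:Sd^i X\to X$ is then constructed by fixing a total order on the vertices of $X$: for each vertex $v$ of $Sd^i X$ lying in the interior of $\sigma(v)\in X$, let $S(v):=\{\sigma\in X: v\in N_\delta(\sigma)\}$ and set $r(v)$ to be the smallest (in the order) vertex of the common face $\rho(v):=\bigcap_{\sigma\in S(v)}(\sigma\cap\sigma(v))$ of $\sigma(v)$. This $r$ is simplicial because the vertices of any simplex of $Sd^i X$ lie in a single simplex of $X$; it is a simplicial approximation to the identity because $r(v)\in\sigma(v)$; and for $x\in N_\ep(\sigma)$ in a simplex $\tau\in Sd^i X$, the mesh bound $\diam(\tau)\leqslant\mesh(Sd^i X)$ gives $d(v_j,\sigma)<\delta$ for every vertex $v_j$ of $\tau$, so $\sigma\in S(v_j)$ and $r(v_j)\in\rho(v_j)\subset\sigma$, hence $r(\tau)\subset\sigma$. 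The main technical obstacle is checking that $\rho(v)$ is always a nonempty face of $\sigma(v)$: each $\sigma\in S(v)$ shares a face with $\sigma(v)$ by the choice of $\ep(X)$, so each $\sigma\cap\sigma(v)$ is a face of $\sigma(v)$, and one has to show that these faces always share a common vertex. This amounts to a finite local combinatorial check on each star of $X$, for which the Corollary of Lemma \ref{annoyinglemma} should be the key input for pulling $\delta$-neighbourhoods back through iterated subdivision.

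Finally, take $P$ to be the straight-line homotopy within each simplex of $X$: for $x$ in $\tau\in Sd^i X$ with $\tau\subset\mu\in X$, set $P(x,t)=(1-t)x+t\,r(x)\in\mu$, which is well-defined because $r(\tau)\subset\mu$ and $\mu$ is convex. The neighbourhood condition for $P$ then follows from convexity of $d_\mu(-,\sigma\cap\mu)$ on $\mu$ (since $\sigma\cap\mu$ is a convex subset) together with the no-shortcut property of $\ep(X)$: for $x\in N_{\ep'}(\sigma)$ with $\ep'\leqslant\ep$, the no-shortcut property gives $d_\mu(x,\sigma\cap\mu)=d_X(x,\sigma)<\ep'$, the first condition gives $r(x)\in\sigma\cap\mu$, and so $d_X(P(x,t),\sigma)\leqslant d_\mu(P(x,t),\sigma\cap\mu)\leqslant (1-t)\,d_\mu(x,\sigma\cap\mu)<\ep'$.
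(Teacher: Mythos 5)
Your approach is genuinely different from the paper's, and it contains a real gap that you yourself flag but do not close: the nonemptiness of $\rho(v) = \bigcap_{\sigma\in S(v)}(\sigma\cap\sigma(v))$. You assert that this reduces to ``a finite local combinatorial check on each star of $X$'' and that the Corollary of Lemma \ref{annoyinglemma} should be the key input, but that Corollary is about pulling neighbourhoods back and forth through a simplicial map; it says nothing about whether a collection of faces of a fixed simplex has a common vertex. Whether a vertex $v$ of $Sd^i X$ can be $\delta$-close to two faces of $\sigma(v)$ that share no vertex is a metric question in the standard metric, and it needs an explicit estimate relating $\delta$ to $\comesh(X)$ and $\dim X$; without that estimate the vertex map $v\mapsto r(v)$ is not known to be defined. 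You also invoke, without proof, a ``no-shortcut'' property $d_\mu(x,\sigma\cap\mu) = d_X(x,\sigma)$ for $x\in\mu$ near $\sigma$; this is a second metric fact your $P$-argument depends on, and it deserves its own justification.

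The paper sidesteps both issues by a more hands-on construction. It sets $\ep(X) := \comesh(X)$ and chooses $i$ so that $\mesh(Sd^i X) < \ep(X) - \ep$, which guarantees that the closed dual cell $D(\widehat{\sigma}, Sd^{i-1}X)$ sits inside $Sd^i\sigma\setminus N_\ep(\partial\sigma)$. It then builds $r$ as a composite $r_1\circ\cdots\circ r_i$ of simplicial approximations $r_j : Sd^j X\to Sd^{j-1}X$: $r_1$ is arbitrary, and for $j\geqslant 2$ each vertex $\widehat{\tau}$ of $Sd^j X$ is sent to a vertex of $\tau$ minimising the distance to $\partial\sigma$, where $\sigma$ is the carrier. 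This yields the monotonicity $d_X(r_j(x),\partial\sigma) \leqslant d_X(x,\partial\sigma)$ for $j\geqslant 2$, so the concatenated straight-line homotopy $P$ never moves points away from $\partial\sigma$, and everything outside the central dual cell --- in particular all of $N_\ep(\partial\sigma)$ --- gets pushed into $\partial\sigma$. Both inclusions in the statement then follow from the mesh estimate, with no need to analyse which simplices are $\delta$-close to which vertex. If you want to salvage your route, the missing work is precisely the estimate ruling out $\delta$-closeness to disjoint faces together with a proof of the no-shortcut property; the paper's iterated push-to-the-boundary construction avoids needing either.
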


\begin{proof}
Set $\ep(X) = \comesh(X)>0$ and take any $\ep< \ep(X)$. Let $i(X,\ep)$ be the smallest integer such that $\mesh(Sd^{i(X,\ep)}\,X) < \ep(X) - \ep$. Note we can find such an integer since dim$(X)<\infty$ and \[\mesh(Sd^j\, X) \leqslant \left(\dfrac{\mathrm{dim}(X)}{\mathrm{dim}(X)+1}\right)^j\mesh(X).\] Hence for all $i\geqslant i(X,\ep)$, $\mesh(Sd^i\,X)< \ep(X)-\ep$ so in particular 
\begin{equation}
D(\widehat{\sigma},Sd^{i-1}\,X) \subset Sd^i\,\sigma \backslash N_\ep(\partial\sigma) \label{inmiddle} 
\end{equation}
for all $\sigma\in X$.

Construct a simplicial approximation to the identity $r:Sd^i\, X\to X$ as the composition $r=r_1\circ\ldots \circ r_i$ of simplicial approximations to the identity $r_j:Sd^j\,X \to Sd^{j-1}\,X$. Let $r_1:Sd\, X \to X$ be any simplicial approximation to the identity and for $j\geqslant 2$ define $r_j:Sd^j\,X\to Sd^{j-1}\,X$ as follows. For all $\tau\in Sd^{j-1}\,X$ there is a unique simplex $\rho\in Sd\,X$ such that $\mathring{\tau}\subset \mathring{\rho}$. Writing $\rho = \widehat{\sigma}_0\ldots \widehat{\sigma}_n$ observe that $\mathring{\tau}\subset \mathring{\sigma}_n$. If $n=0$, then $\widehat{\tau}=\tau = \rho$ is a vertex so we must define $r_j(\widehat{\tau}):= \widehat{\tau}$ as we choose simplicial approximations to send $Sd\, \sigma$ to $\sigma$. Otherwise define $r_j(\widehat{\tau})$ to be any vertex $v$ of $\tau$ that minimises the distance $d_X(v,\widehat{\sigma}_0\ldots\widehat{\sigma}_{n-1})$.

Since $X$ is given the standard metric, if $\sigma^\prime<\sigma$ is a codimension $1$ face, then \[d_X(x,\sigma^\prime) = d_X(x,\partial\sigma),\quad \forall x\in \sigma^\prime*\widehat{\sigma}.\] Thus, for $j\geqslant 2$ we have defined $r_j$ on the vertices of $Sd^j\,X$ to minimise the distance to $\partial\sigma$. As all simplices $\widetilde{\tau}\in Sd^j\,\sigma$ are contained in $\widehat{\sigma}*\sigma^\prime$ for some $\sigma^\prime<\sigma$, all vertices of $\widetilde{\tau}$ are sent towards $\sigma^\prime$ and hence in the same direction towards the boundary.\footnote{Here towards can mean a distance $0$ towards, the point is that things get no further away from the boundary.} Thus by convexity all points in $\widetilde{\tau}$ are mapped towards the boundary by $r_j$, $j\geqslant 2$. Whence 
\begin{equation}\label{rjtoboundary}
d_X(r_j(x),\partial\sigma) \leqslant d_X(x,\partial\sigma),\quad \forall x\in Sd^j\,\sigma,\, j\geqslant 2.
\end{equation}
Define $P_j:\id_X\simeq r_j$ to be the straight line homotopy for all $j$. By equation $(\ref{rjtoboundary})$, if $j\geqslant 2$ then  
\[d_X(P_j(x,t),\partial\sigma) \leqslant d_X(P_j(x,s),\partial\sigma),\quad \forall 0\leqslant s \leqslant t \leqslant 1,\;\forall x\in Sd^j\,\sigma.\] If $j=1$ this condition trivially holds for all $x\in Sd\,(\partial\sigma)$ but need not hold elsewhere.

Let $P:\id_X \simeq r$ be the concatenation \[P= P_1(r_2\circ\ldots r_j)*\ldots *P_{j-1}(r_j)*P_j.\] By the above considerations it is clear that 
\begin{align}
 r(Sd^j\,\sigma \backslash \mathring{D}(\widehat{\sigma},Sd^{j-1}\,\sigma)) \subset \partial\sigma,\quad &\forall \sigma\in X, \notag \\
d_X(P(x,t),\partial\sigma) \leqslant d_X(P(x,s),\partial\sigma),\quad &\forall 0\leqslant s \leqslant t \leqslant 1,\;\forall x\in Sd^j\,\sigma \backslash \mathring{D}(\widehat{\sigma},Sd^{j-1}\,\sigma). \notag
\end{align}
The result now follows from $(\ref{inmiddle})$ as for all $\sigma \in X$, \[ N_\ep(\partial \sigma) \subset Sd^i\,\sigma\backslash D(\widehat{\sigma},Sd^{j-1}\,\sigma).\]
\end{proof}

\begin{ex}
Let $\sigma = v_0v_1v_2$ be a $2$-simplex. Defining a simplicial approximation to the identity $r:Sd^2\,\sigma \to \sigma$ following the procedure in Proposition \ref{technicalprop} we obtain $r$ as depicted in \Figref{Fig:retract}.
\begin{figure}[h!]
\begin{center}
{
  \psfrag{v0}[l]{$v_0$}
  \psfrag{v1}{$v_1$}
  \psfrag{v2}[]{$v_2$}
\includegraphics[width=7cm]{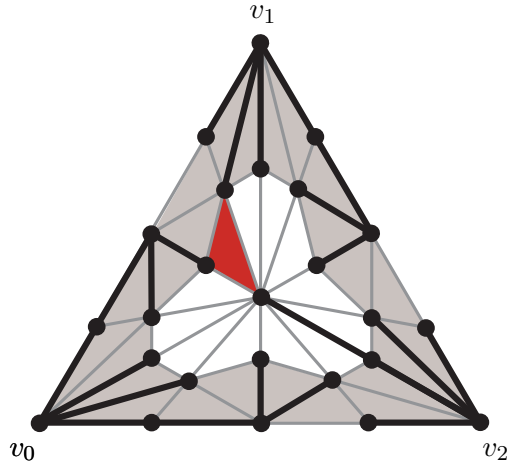}
}
\caption{Constructing $r=r_1\circ r_2$ for the $2$-simplex.}
\label{Fig:retract}
\end{center}
\end{figure}

The region shaded in grey is retracted to $\partial\sigma$ regardless of the choice of where $r_1$ sends $\widehat{\sigma}$. The only $2$-simplex not sent to the boundary is $\Gamma_\sigma(\sigma)$ which is shaded in red - this is always to be found in $D(\widehat{\sigma},Sd\,\sigma)$. The thicker lines are to signify where each vertex is sent. For example, the thicker line between $\widehat{v_0v_2}$ and $\widehat{\widehat{v_0v_2}\widehat{\sigma}}$ tells us that $r_2(\widehat{\widehat{v_0v_2}\widehat{\sigma}}) = \widehat{v_0v_2}$ and the thicker line between $v_0$ and $\widehat{v_0v_2}$ tells us that $r_1(\widehat{v_0v_2}) = v_0$.
\end{ex}

With the retracting maps $r$ constructed as in the technical proposition the proof of the main theorem reduces to verification of the $Y$-triangular condition.

\begin{proof}[Proof of Theorem \ref{topsqueezing}]
Let $k:= \dfrac{\mesh(Y)}{2\,\comesh(X)}$, $K:= \dfrac{2\,\comesh(Y)}{\mesh(X)}$ and \[\ep(X,Y):= \mathrm{min}(k\comesh(X), k\comesh(Y)/K).\]
For all $\ep<\ep(X,Y)$, using Proposition \ref{technicalprop} we obtain integers $i_X:= i(X,\ep)$, $i_Y:=i(Y,\ep)$ such that for all $i\geqslant i_{X,Y}:=\mathrm{max}(i_X,i_Y)$ we obtain simplicial approximations to the identity $r_X:Sd^i\, X \to X$, $r_Y:Sd^i\,Y \to Y$ and homotopies $P_X:\id_X\simeq r_X$, $P_Y:\id_Y\simeq r_Y$ such that 
\begin{eqnarray}
 r_X(N_{\ep/k}(Sd^i\, \sigma))&\subset& \sigma, \quad \forall \sigma \in X, \label{eqnrX} \\ 
 P_X(N_{\ep^\prime}(Sd^i\, \sigma),I)&\subset& N_{\ep^\prime}(Sd^i\, \sigma), \quad \forall \ep^\prime \in [0, \ep/k],\; \sigma \in X, \label{eqnPX} \\
 r_Y(N_{K\ep/k}(Sd^i\, \tau))&\subset& \tau, \quad \forall \tau \in Y, \label{eqnrY} \\ 
 P_Y(N_{\ep^\prime}(Sd^i\, \tau),I)&\subset& N_{\ep^\prime}(Sd^i\, \tau), \quad \forall \ep^\prime \in [0, K\ep/k]\; \tau \in Y. \label{eqnPY} 
\end{eqnarray}

Let $f:X\to Y$ be a simplicial $\ep$-controlled homotopy equivalence with inverse $g$ and homotopies $h_1:fg\sim \id_Y$, $h_2:gf\sim \id_X$. Then $f^\prime:= r_Y\circ f$ is a $Y$-triangular homotopy equivalence with inverse $g^\prime:= r_X\circ g$. Note first that any map $f^\prime:(X,f^\prime) \to (Y,\id_Y)$ is automatically $Y$-triangular with respect to itself, so the first thing we need to check is whether $g^\prime$ is a $Y$-triangular map with respect to $f^\prime$:
\begin{eqnarray*}
g^\prime(\mathring{\sigma})= r_X(g(\mathring{\sigma})) &\subset & r_X(f^{-1}(N_\ep(\sigma))), \quad\mathrm{since}\;g\;\mathrm{is}\;\ep\mathrm{-controlled}, \\
&\subset& r_X(N_{\ep/k}(f^{-1}(\sigma))),\quad\mathrm{by}\; \mathrm{Lemma} \; \mathrm{\ref{annoyinglemma}, } \\
&\subset& f^{-1}(\sigma),\quad\mathrm{by}\; \mathrm{equation} \; \mathrm{(\ref{eqnrX}).}
\end{eqnarray*}
Since $f\sim f^\prime$ and $g\sim g^\prime$ it is automatic that $g$ is a homotopy inverse to $f$.

Now we check that the new homotopies are also $Y$-triangular:
\begin{eqnarray*}
f^\prime\circ g^\prime &=& r_Y\circ f\circ r_X \circ g \\
&\simeq& r_Y\circ f \circ g \\
&\simeq& r_Y \\
&\simeq& \id_Y.
\end{eqnarray*}
Since the composition of $Y$-triangular homotopies is again $Y$-triangular it suffices to check each of the above homotopies is $Y$-triangular. 

The first is $r_Y\circ f\circ P_X \circ g$. We verify that it is $Y$-triangular using an arbitrary $\mathring{\sigma}\in Y$:
\begin{eqnarray*}
 r_Y(f(P_X(g(\mathring{\sigma}),I))) &\subset& r_Y(f(P_X(f^{-1}(N_\ep(\sigma)),I))),\quad\mathrm{since}\;g\;\mathrm{is}\;\ep\mathrm{-controlled}, \\
 &\subset& r_Y(f(P_X(N_{\ep/k}(f^{-1}(\sigma)),I))),\quad\mathrm{by}\; \mathrm{Lemma} \; \mathrm{\ref{annoyinglemma}, } \\
 &\subset& r_Y(f(N_{\ep/k}(f^{-1}(\sigma)))),\quad\mathrm{by}\; \mathrm{equation} \; \mathrm{(\ref{eqnPX}),} \\
 &\subset& r_Y(f(f^{-1}(N_{K\ep/k}(\sigma)))),\quad\mathrm{by}\; \mathrm{Lemma} \; \mathrm{\ref{annoyinglemma}, } \\
 &\subset& r_Y(N_{K\ep/k}(\sigma)), \\
 &\subset& \sigma, \quad\mathrm{by}\; \mathrm{equation} \; \mathrm{(\ref{eqnrY}).}
\end{eqnarray*}

The second homotopy, $r_Y\circ h_1$, is seen to be $Y$-triangular by the following calculation:
\begin{eqnarray*}
 r_Y(h_1(\mathring{\sigma},I)) &\subset& r_Y(N_\ep(\sigma)),\quad\mathrm{since}\;h_1\;\mathrm{is}\;\ep\mathrm{-controlled}, \\
 &\subset& \sigma, \quad\mathrm{by}\; \mathrm{equation} \; \mathrm{(\ref{eqnrY}).}
\end{eqnarray*}

The final homotopy $P_Y$ is $Y$-triangular because $P_Y(\mathring{\sigma},I) \subset \sigma$ by equation $(\ref{eqnPY})$. A similar analysis shows that the composition $g^\prime\circ f^\prime$ is $Y$-triangular homotopic to $\id_X$ which completes the proof.
\end{proof}

\bibliographystyle{hep}
\bibliography{spirosbib}{}
\end{document}